\def\ds{\displaystyle}
\newcommand{\bsx}{{\boldsymbol x}}
\newcommand{\BN}{{\mathbb N}}
\newcommand{\BZ}{{\mathbb Z}}
\newcommand{\fm}{{\mathfrak m}}
\newcommand{\loe}{{{ll}}}
\newcommand{\Ap}{\operatorname{Ap}}
\newcommand{\pd}[2]{\operatorname{pd}_{#1}{#2}}
\newcommand{\depth}[2]{\operatorname{depth}_{#1}{#2}}
\newcommand{\edim}{\operatorname{edim}}
\newcommand{\inde}{\operatorname{index}}
\newcommand{\mult}{\operatorname{mult}}
\newcommand{\codim}{\operatorname{codim}}
\newcommand{\ord}{\operatorname{ord}}
\theoremstyle{plain}
\newtheorem{theorem}{Theorem}[section]
\newtheorem{corollary}[theorem]{Corollary}
\newtheorem{lemma}[theorem]{Lemma}
\newtheorem{proposition}[theorem]{Proposition}
\theoremstyle{definition}
\newtheorem{example}[theorem]{Example}
\numberwithin{equation}{theorem}
\theoremstyle{remark}
\begin{document}

\date{\today}
\title[The index of a numerical semigroup ring]{The index of a numerical semigroup ring}

\author[O.~Veliche]{Oana~Veliche}

\address{Department of Mathematics, Northeastern University, 360 Huntington Avenue, Boston, MA 02115 }

\email{o.veliche@neu.edu}

\urladdr{http://www.math.northeastern.edu/$\sim$veliche}

\address{Saint Herman Christian School, 62 Harvard Avenue, Allston, MA 02134}
\email{oana@sainthermanschool.org}
%\thanks {}

\subjclass[2000]{ 16E65, 13H10, 13D05, 13P20, 13A30,18G60, 18G99}

\keywords{Cohen-Macaulay approximation, Auslander delta invariant, index, numerical semigroup ring, symmetric semigroup ring, complete intersection, Gorenstein, generalized Loewy length}

\begin{abstract} Let $R=k[|t^a,t^b,t^c|]$  be a  complete intersection numerical semigroup ring over an infinite field $k$, where $a,b,c\in\BN$. The generalized Loewy length, which is Auslander's index in this case, is computed in terms of the minimal generators of the semigroup: $a,b$ and $c$.
Examples  provided show that the left hand side of Ding's inequality  $\mult(R)-\inde(R)-\codim(R)+1\geq 0$  can be made arbitrarily large for  rings $R$  with $\edim(R)=3$ . The index of a complete intersection numerical semigroup ring with embedding dimension greater than three is also computed. 

\end{abstract}

\maketitle

%\tableofcontents
\section*{Introduction}

Let  $(R,\fm, k)$ be  a local noetherian Gorenstein ring with maximal ideal $\fm$ and residue field $k$ and let $M$ be a finitely generated $R$-module. 
{\it Auslander's delta invariant} of the module $M$, denoted by $\delta(M)$, is the smallest non-negative integer $\mu$ such that there exists an exact sequence of $R$-modules, called Cohen-Macaulay approximation,  $0\to Y\to X\oplus R^\mu\to M\to 0$
such that $X$ is a maximal Cohen-Macaulay module with no free direct summands and $\pd{R}{Y}<\infty$; see \cite{ab}.
It is clear from the definition that $\delta(R^n)=n$ for every integer $n\geq 1$. If $\pd RM<\infty$, then $\delta(M)$ is the minimal numbers of generators of the module $M$. Moreover, 
 a surjective homomorphism $M\to N\to 0$ induces an inequality between the delta invariants$\colon$ $\delta(N)\leq\delta(M)$. In particular,  
 $0\leq \delta(R/\fm)\leq \delta(R/\fm^2)\leq\dots\leq\delta(R/\fm^i)\leq\delta(R/\fm^{i+1})\leq \cdots\leq1,$ for all  $i\geq 1$.
		
The {\it index} of the ring $R$ introduced by Auslander and studied by Ding in his thesis \cite{d}, denoted by $\inde(R),$ is defined as the minimum $i\geq 1$ such that $\delta(R/\fm^i)=1$; see also \cite[p. 247]{d92}. The index is finite for all Gorenstein rings.
Ding studies further in  \cite{d,d92} the properties of the index over Gorenstein rings with infinite residue field. He proves that a ring $R$ is regular if and only if $\inde(R)$=1. 
Moreover, he shows that the ring $R$ is a hypersurface if and only if $\inde(R)=\mult(R)$, where $\mult(R)$ the multiplicity of the ring $R$. Furthermore, if $R$ is not regular, then 
\begin{equation*} 
\tag{*} \mult(R)-\inde(R)-\codim(R)+1\geq 0.
\end{equation*}
\noindent
In particular, $\mult(R)\geq\inde(R)$. Here $\codim(R)= \edim(R)-\dim(R)$  denotes the codimension of the ring $R$. For a proof see e.g. \cite[Proposition 1.6]{d92}.

 Martsinkovsky \cite{m96} extends the notion of index  to rings which are not necessarily noetherian, local or Gorenstein. He shows that the index is finite if  $R$ is  a noetherian local ring.  This index satisfies all of the properties mentioned above.

By Ding's result \cite[Theorem 2.14]{d} when $R$ is a hypersurface the left hand side of the inequality $(*)$ is zero. Thus, one could ask the following$\colon$

\vspace{0.3cm}

\noindent 
{\bf Question.}  When $R$ is complete intersection ring is the  left hand side of the inequality $(*)$ bounded above by a constant? 

\vspace{0.3cm}
\noindent
We answer negatively this question by providing examples which show that the left hand side can be made arbitrarily large.
Example \ref{ci example 1} shows that  for any integer  $n$ with  $n\geq 2$,  there exist complete intersection rings $R_n$  with $\edim(R_n)=3$ and
$$\mult(R_n)-\inde(R_n)-\codim(R_n)+1=2n-3.$$
Proposition  \ref{index exp 2} shows that  for any positive integer $n$ there exist complete intersection rings $R_n$ with $\edim (R_n)=n$  such that 
$$\mult(R_{n})-\inde(R_{n})-\codim(R_{n})+1=2^n-2n.$$
These examples were found among the numerical semigroup rings.
 
Working with semigroup rings requires methods different from those used in defining the index originally. In Section \ref{Numerical Semigroup Rings} we introduce some  notions that are needed in the paper. 

The main result of the paper is Theorem \ref{main}. It leads to an explicit formula for the index of  complete intersection numerical rings  semigroup rings $R=k[|t^a,t^b,t^c|]$, where $k$ is an infinite field and $a,b,c\in\BN$, with $\gcd(a,b,c)=1$; see Remark \ref{Remark main}.

 Proposition \ref{Shen-Bryant} in Section \ref{ci edim>3} is a result confirmed by Bryant and Shen \cite{bs}; the proof they suggest is included here. Corollary \ref{index Gor} gives a general formula for the index of a Gorenstein numerical semigroup ring in terms of the order of certain elements of the semigroup. This corollary does not give an explicit formula for the index as given by  Theorem \ref{main}, however its strength is illustrated in the proof of Proposition \ref{index exp 2}
where we compute the index for some rank 1 numerical semigroup rings with
embedding dimension greater than three; see also Remarks \ref{Remark} and \ref{Remark examples}.
 %$\edim (R)\geq 3$.

%%%%%%%%%%%%%%%%%%%%%%%%%%%%%%%%%%%%%%%%%%%%%%%%%%%%%%%%%%%%%%%%%%%%%%%%%%%%%%%%%%%%%%%%%%%%%%%%%%%%%%%%%%%%%%%%%%%%%%%%%%%%%%%%%%%%%%%%%%%%%%%%%%%%%%%%%%%%%%
%%%%%%%%%%%%%%%%%%%%%%%%%% NUMERICAL SEMIGROUP RINGS %%%%%%%%%%%%%%%%%%%%%%%%% %%%%%%%%%%%%%%%%%%%%%%%%%%%%%%%%%%%%%%%%%%%%%%%%%%%%%%%%%%%%%%%%%%%%%%%%%%%%%%%%%%%%%%%%%%%%%%%%%%%%%%%%%%%%%%%%%%%%%%%%%%%%%%%%%%%%%%%%%%%%%%%%%%%%%%%%%%%%%%%

\section{Numerical Semigroup Rings}
\label{Numerical Semigroup Rings}

In this section we introduce some terminology that will be used in this paper. 

%\chunk 
%\label{Frobenius} 
\noindent
Let $H$ be a semigroup minimally generated by natural numbers $a_1<a_2<\cdots<a_e$. 
The set  $\BN\setminus H$ is finite  if and only if  $\gcd(a_1,a_2,\dots, a_e)=1$; in this case $H$ is called a {\it numerical semigroup} and the {\it Frobenius number} of  $H$ is given by
$$f(H)=\max\{h\not\in H\ |\ h+i\in H,\ \text{for every positive integer}\ i\}.$$
Note that $f(\BN)=-1$, and if $H\not=\BN$, then $f(H)=\max\{\BN\setminus H\}.$

\chunk
\label{Properties semigroup ring}
The associated {\it numerical semigroup ring} of the semigroup $H$ is defined as  $$R=k[|H|]=k[|t^s|s\in H|]$$ and has the following properties:
$R$ is a local ring with maximal ideal $\fm=(t^{a_1},\dots, t^{a_e})$, thus $\edim(R)=e$; $R$ is a domain; $\depth{}(R)=1=\dim{}(R)$;   $\mult(R)=a_1$; and  $R$ is Gorenstein if and only if the semigroup $H$ is symmetric (i.e. $s\in H$ if and only if  $f(H)-s\not\in H.$)

\vspace{0.3cm}

 A semigroup $H$ is called {\it complete intersection} or {\it Gorenstein} if the corresponding semigroup ring $R=k[|H|]$ is complete intersection or Gorenstein respectively.

\chunk
\label{index min} Let $R=k[|H|]$ be a  Gorenstein numerical semigroup ring   with $k$ infinite field. The index of $R$ is well defined and Watanabe shows in \cite[Proposition 1.23]{d}  that
$$
\inde(R)=\min\{i| \fm^i\subseteq (t^s), \ \text{for some}\ s\in H\}.
$$
In particular, if  we set $N_{s}=\min\{i|\fm^i\subseteq (t^{s})\}$,  for $s\in H$,  then 
$$\inde(R)=\min_{1\leq j\leq e} \{N_{a_j}\}.$$

%\noindent
\chunk If $(R,\fm)$ is a noetherian local ring, {\it the generalized Loewy length} is defined by
$$\loe(R)=\min\left\{i|\fm^i\subseteq (\bsx),\ \text{for some system of parameters $\bsx$ of $R$}\right\}.$$  
We recall that for any Gorenstein ring $R$ we have $$\inde(R)\leq\loe(R).$$ Indeed, if $n=\loe(R)$, then $\fm^n\subseteq (\bsx)$ for some system of parameters $\bsx$ of $R$.  This inclusion induces a  surjection $R/{\fm^n}\to R/(\bsx)$ which further induces the inequality  $\delta(R/{\fm^n})\geq \delta(R/(\bsx))$. Since $\bsx$ is a system of parameters, we have $\pd{R}{R/(\bsx)}<\infty$, so $\delta(R/(\bsx))=1$. Thus, $\delta(R/\fm^n)\not=0$, and by definition we have $\inde(R)\leq n$.

Watanabe's result from \ref{index min} shows that when $R$ is a Gorenstein numerical semigroup ring we have $\loe(R)\leq\inde(R).$  Therefore, the index of a Gorenstein numerical semigroup ring is given by its generalized Loewy length.

%%%%%%%%%%%%%%%%%%%%%%%%%%%%%%%%%%%%%%%%%%%%%%%%%%%%%%%%%%%%%%%%%%%%%%%%%%%%%%%%%%%%%%%%%%%%%%%%%%%%%%%%%%%%%%%%%%%%%%%%%%%%%%%%%%%%%%%%%%%%%%%%%%%%%%%%%%%%%%
%%%%%%%%%%%%%%%%%%%%%%%%%%   CI   EDIM  3  %%%%%%%%%%%%%%%%%%%%%%%%%%%%%%%%%%% %%%%%%%%%%%%%%%%%%%%%%%%%%%%%%%%%%%%%%%%%%%%%%%%%%%%%%%%%%%%%%%%%%%%%%%%%%%%%%%%%%%%%%%%%%%%%%%%%%%%%%%%%%%%%%%%%%%%%%%%%%%%%%%%%%%%%%%%%%%%%%%%%%%%%%%%%%%%%%%

\section{The index of Gorenstein semigroup rings of edim 3}
\label{Sec: ci edim=3}
%%%%%%%%%%%%%%%%%%%%%%%%%%%%%%%%%%%%%%%%%%%%%%%%%%%%%%%%%%%%%%%%%%%%%%%%%%%%%%%%

The normal semigroup rings of the form $R=k[|t^a,t^b|]$ are hypersurfaces and $\inde(R)=\mult(R)=\min\{a,b\}$. Therefore, we turn our attention to the case when the semigroup ring is of embedding dimension three. These rings were studied by Herzog in \cite{h} and by Watanabe in \cite[Proposition 3]{kw}. 

\chunk 
\label{ci edim 3}
Let  $H$ be a numerical semigroup minimally generated by three elements and set $R=k[|H|]$ for an infinite field $k$. The following are equivalent, after  a possible relabeling of the generators of $H$.
\begin{enumerate}[\rm\quad(i)]
\item $R$ is complete intersection;
\item $R$ is Gorenstein;
\item There exist integers $p,x,y\geq 2$ such that $H=\langle a,b,c\rangle$ where $$  a\in\langle x,y\rangle\  \text{with}\  a\not\in\{x,y\}\ \text{and}\ b=px,\ c=py,\ \gcd(x,y)=\gcd(a,p)=1.$$ 
\end{enumerate} 
Moreover, when one (hence all) of these cases holds, $f(H)=pxy+pa-(a+b+c).$

\begin{theorem}
\label{main} 
Let $R$ be a complete intersection semigroup ring of embedding dimension three. With the notation from \ref{ci edim 3}, set  $a=a'x+a''y$ with $a'$ and $a''$ non-negative integers. Then the following equalities  hold.
	
	\begin{enumerate}[\rm\quad(a)]
	
		\vspace{0.5cm}

		\item If $x<y$, then $N_a=
			\begin{cases}
				x+a'+y\ds\frac{a''}{x}-1,&\text{if}\ \ds\frac{a''}{x}\in\BN;\\
				&\\
				y+a'+a''+(y-x)\ds\left\lfloor\frac{a''}{x}\right\rfloor-1,&\text{if}\ 
				\ds\frac{a''}{x}\not\in\BN.
			\end{cases}$
		
		\vspace{0.5cm}
		
		\item $N_b=
			\begin{cases}
				p+x-1,&\text{if}\ a'\not=0\\
				     &\text{or}\ a'=0,\ \text{and}\  p<a''; \\
				     &\\
				a''+p\ds\frac{x}{a''}-1,&\text{if}\ a'=0,\ a''<p,\ \text{and}\ 
				\ds\frac{x}{a''}\in\BN;\\ 
				&\\    
				p+x-1+(p-a'')\left\lfloor\ds\frac{x}{a''}\right\rfloor,&\text{if}\ a'=0,\ 
				a''<p,\ \text{and}\ \ds\frac{x}{a''}\not\in\BN.
			 \end{cases}$
		 
		\vspace{0.5cm} 
		 		 
		\item $N_c=
			\begin{cases}
				p+y-1,&\text{if}\ a''\not=0\\
				     &\text{or}\ a''=0,\ \text{and}\  p<a'; \\
				     &\\
				a'+p\ds\frac{y}{a'}-1,&\text{if}\ a''=0,\ a'<p,\ \text{and}\  	 
				\ds\frac{y}{a'}\in\BN;\\ 
				&\\    
				p+y-1+(p-a')\left\lfloor\ds\frac{y}{a'}\right\rfloor,&\text{if}\ a''=0,\ 
				a'<p,\ \text{and}\ \ds\frac{y}{a'}\not\in\BN.
			 \end{cases}$
		 	\end{enumerate}
\end{theorem}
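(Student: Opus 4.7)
The plan is to translate the containment $\fm^i \subseteq (t^s)$ into a combinatorial membership question in $\langle x, y\rangle \cup \{0\}$, and to extract each $N_s$ by maximizing the required $i$ over a finite list of \emph{boundary} triples. First, $\fm^i \subseteq (t^s)$ is equivalent to requiring $\alpha a + \beta b + \gamma c - s \in H \cup \{0\}$ for every $(\alpha, \beta, \gamma) \in \BN^3$ with $\alpha + \beta + \gamma = i$; triples with positive coefficient at $s$ are automatic, so only boundary triples (zero coefficient at $s$) need checking. Combining the complete-intersection relation $pa = a'b + a''c$ with $\gcd(a, p) = 1$ yields the Apéry-type decomposition: every $h \in H$ is of the form $\alpha' a + pn$ with $\alpha' \geq 0$ and $n \in \langle x, y\rangle \cup \{0\}$, and $\alpha'$ is determined modulo $p$ by $h$. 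Applied to boundary triples this produces two concrete membership tests:
\begin{align*}
p\ell - a &\in H \iff \ell \geq a \text{ and } \ell - a \in \langle x, y\rangle \cup \{0\}, \\
\alpha a + \gamma c - b &\in H \iff \exists\, k \geq 0 \text{ with } kp \leq \alpha \text{ and } (ka' - 1)x + (ka'' + \gamma)y \in \langle x, y\rangle \cup \{0\},
\end{align*}
together with a symmetric test under $x \leftrightarrow y,\ a' \leftrightarrow a''$ for the $c$-case.

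\medskip

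For $N_a$: by the first test, $N_a$ is the smallest $i$ such that $\beta x + \gamma y - a \in \langle x, y\rangle \cup \{0\}$ for every $\beta + \gamma = i$. Expanding $\beta x + \gamma y - a = (\beta - a')x + (\gamma - a'')y$ and using $yx = xy$ to shift between representations, membership becomes equivalent to the existence of an integer $k$ with $\lceil (a' - \beta)/y \rceil \leq k \leq \lfloor (\gamma - a'')/x \rfloor$, or the mirror inequality in the $\gamma < a''$ region. Under $x < y$, the $\gamma < a''$ region supplies the binding constraints; partitioning $\gamma \in [0, a'' - 1]$ by the value of $\lceil (a'' - \gamma)/x \rceil$ yields a lower bound on $i$ that jumps by $y - x$ between consecutive classes. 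When $a''/x \in \BN$ the top class is attained at $\gamma = x - 1$ and gives $N_a = x + a' + y(a''/x) - 1$; when $a''/x \notin \BN$ an additional class contributes $(y - x)\lfloor a''/x \rfloor$, yielding the second formula of (a).

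\medskip

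For $N_b$: by the second test, $k = 0$ succeeds exactly when $\gamma \geq x$, and $k \geq 1$ requires $\alpha \geq kp$ together with $ka'' + \gamma \geq x$ (the latter being automatic once $ka' \geq 1$). If $a' \geq 1$, failures occur only when both $\alpha < p$ and $\gamma < x$, giving $N_b = p + x - 1$. For $a' = 0$ and $p < a''$, the maximum of $\gamma + p\lceil(x-\gamma)/a''\rceil$ over $\gamma \in [0, x-1]$ is attained at $\gamma = x - 1$ and again equals $p + x - 1$. For $a' = 0$ and $a'' < p$, this maximum lies in the topmost partition class of $\lceil(x-\gamma)/a''\rceil$ and evaluates to $a'' + p(x/a'') - 1$ when $x/a'' \in \BN$, or to $p + x - 1 + (p - a'')\lfloor x/a''\rfloor$ otherwise. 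The formula for $N_c$ follows by the symmetry $b \leftrightarrow c$, $x \leftrightarrow y$, $a' \leftrightarrow a''$. The main obstacle throughout is pinpointing the extremal boundary triple in each sub-case: the upper bound $\fm^{N_s} \subseteq (t^s)$ is routine once the maximum is known, but the strict lower bound requires producing a specific triple at which the membership test fails, which amounts to a Frobenius-number obstruction in $\langle x, y\rangle$.
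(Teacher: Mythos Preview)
Your outline is correct and rests on the same core reduction as the paper: rewrite $\fm^i\subseteq(t^s)$ as a family of membership questions in $\langle x,y\rangle$ via the relation $pa=a'b+a''c$ and $\gcd(a,p)=1$, then characterise membership by the existence of an integer between two linear bounds (your ceiling/floor inequalities are exactly the paper's inequality $\frac{N-\delta-a'}{y}\ge z_\delta\ge\frac{-\delta+a''}{x}$ in different dress).

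Two organisational choices in the paper are worth noting. First, for (a) the paper immediately normalises to $0\le a''<x$ by writing $a''=qx+d''$ and absorbing $qy$ into $a'$; this collapses your partition of $[0,a''-1]$ by $\lceil(a''-\gamma)/x\rceil$ into the single dichotomy $d''=0$ versus $d''\ne0$, and the verification that the ``non-binding'' region ($\delta\ge x$ when $a''=0$, or $\delta>a''$ when $a''\ne0$) never obstructs is carried out explicitly by a short chain of inequalities. Your assertion that ``the $\gamma<a''$ region supplies the binding constraints'' is the right intuition but is exactly the step that needs this explicit check. Second, for (b) in the case $a'=0$ the paper does not repeat the maximisation you sketch; it simply relabels the generators---observing that $\{a,b,c\}=\{a''y,px,py\}$ satisfies the hypotheses of (a) with the roles $(x,y)\to(p,a'')$ or $(a'',p)$---and reads off $N_b$ from the already-proved formula for $N_a$. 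Your direct computation of $\max_{0\le\gamma\le x-1}\bigl[\gamma+p\lceil(x-\gamma)/a''\rceil\bigr]$ gives the same answer, but the relabeling avoids doing the work twice.
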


\begin{proof} 

	(a). Set $N=N_a$. We may reduce to the case $a''<x$ and 
	prove that
	$$N=
	\begin{cases}
		x+a'-1,&\text{if}\ a''=0;\\
		y+a'+a''-1,&\text{if}\ a''\not=0.           
	\end{cases}  
	$$
	
	Indeed, assume that $a=a'x+a''y$ for some non-negative integers $a'$ and $a''$. There exist unique non-negative integers $q$ and $d''$ such that
	$a''=qx+d''$ with $0\leq d''<x$. If we set $d'=a'+qy$, then  $a=d'x+d''y$.
We have $d''=0$ if and only if $\ds\frac{a''}{x}\in\BN$, and then 
	$$x+d'-1=x+a'+y\ds\frac{a''}{x}-1.$$
We have $d''\not=0$ if and only if $\ds\frac{a''}{x}\not\in\BN$, and then
	$$y+d'+d''-1=y+a'+a''+(y-x)\ds\left\lfloor\frac{a''}{x}\right\rfloor-1.$$
For the rest of the proof we thus assume that $0\leq a''<x<y$.
By definition, $N$ is the minimum natural number with the property that for any non-negative integers $u,v$ and $w$ such that $u+v+w=N$ there exist non-negative integers $\alpha,\beta$ and $\gamma$ with $\alpha\geq 1$ such that 
\begin{equation*}
		\label{eq}
		ua+vb+wc=\alpha a+\beta b +\gamma c.
	\end{equation*}
We may assume that $u=0$, thus we get 
\begin{equation*}
		vpx+wpy=\alpha a+\beta px +\gamma py.
	\end{equation*}
Since $\gcd(a,p)=1$, there exists an integer $\alpha'\geq 1$ such that $\alpha=p\alpha'$, so after dividing by $p$ the equality above becomes 
	\begin{align*}
		vx+wy&=\alpha'(a'x+a''y)+\beta x+\gamma y\ \iff\\
		x(v-\alpha'a'-\beta)&=y(\gamma +\alpha'a''-w).
	\end{align*}
Since $\gcd(x,y)=1$, there exists an integer $z$ such that
\begin{align*}
		v-\alpha'a'-\beta&=yz\\
		\gamma+\alpha'a''-w&=xz.
	\end{align*}
Setting $v=N-\delta$ and $w=\delta$ for some $\delta\in\{0,1,\dots,N\}$, and using that $\alpha'\geq 1,\ \beta\geq 0$ and $\gamma\geq 0$, we get that $N$ is the  minimum positive integer such that for each $0\leq \delta\leq N$, there exists $z_\delta\in\BZ$ such that
\begin{equation}
		\label{z inequality}
		\frac{N-\delta-a'}{y}\geq z_\delta\geq \frac{-\delta+a''}{x}.
	\end{equation}  
In particular, for $\delta=0$ we have 
\begin{equation}
		\frac{N-a'}{y}\geq z_0\geq \frac{a''}{x}.
	\end{equation}

\noindent	
Case $a''=0$. 
	
	The inequality (\ref{z inequality}) becomes
	
	\begin{equation}
		\label{z inequality a''=0}
		\frac{N-\delta-a'}{y}\geq z_\delta\geq -\frac{\delta}{x}.
		\end{equation} 
If we assume that $N<x$, then by choosing $\delta=N$ the inequality above implies
		\begin{equation*}
		0>-\frac{a'}{y}\geq z_N\geq-\frac{N}{x}>-1,
	\end{equation*}
which cannot happen for any $z_N\in\BZ$. Thus, $N\geq x$.
	
\noindent	
	If $0\leq\delta< x$, then $z_\delta\ge0$, thus $N\geq\delta +a'$. In particular, when $\delta=x-1$ we get
	\begin{equation}
	\label{N ineq}
	N\geq x+a'-1.
	\end{equation}
	\noindent
	If $\delta \geq x$, write $\delta=qx+r$ with $q,r$ non-negative integers with $0\leq r<x$. Using (\ref{N ineq}) we obtain
\begin{align*}
	                                         \frac{N-\delta-a'}{y}+\frac{\delta}{x}                   &=\frac{N-a'}{y}+\delta\frac{y-x}{xy}\\
	                                   &\geq\frac{x-1}{y}+\delta\frac{y-x}{xy}\\
	                                   &\geq\frac{x-1}{y}+\frac{y-x}{y}\\
	                                   &=1-\frac{1}{y}\\
	                                   &>1-\frac{1}{x}\\
	                                   &=\frac{x-1}{x}\\
	                                   &\geq\frac{r}{x}. 
	\end{align*}
So, $\ds\frac{N-\delta-a'}{y}\ge\frac{r-\delta}{x}=-q$.	In particular, the inequality (\ref{z inequality a''=0}) holds if  $z_\delta=-q$.
	Thus, we have $N=x+a'-1.$

\noindent
Case $a''\not=0$.
	
	If we assume that $N<a''$, then by choosing $\delta=N$, the inequality (\ref{z inequality}) implies 
\begin{equation*}
		0\geq-\frac{a'}{y}\geq z_N\geq\frac{-N+a''}{x}>0,
	\end{equation*}
which is a contradiction. Thus,  we must have $N\geq a''$. We apply inequality (\ref{z inequality}) to several cases of $\delta$ in order to show that we should have $N=a'+a''+y-1$.
	
	\noindent
	If $\delta=a''$, then $z_\delta\geq 0$ and then $N\geq a'+a''$.
	
	\noindent
	If $0\leq \delta<a''$, then $z_\delta\geq 1$ and  then $N\geq a'+y+\delta$. In particular, for $\delta=a''-1$, $$N\geq a'+a''+y-1.$$ 
	
	\noindent
	If $\delta>a''$, then write $\delta-a''=qx+r$ for non-negative integers $q,\ r$ and $0\leq r<x$. Using that $N\geq a'+a''+y-1$ and $\delta\geq a''+1$, we obtain
	
	\begin{align*}
	\frac{N-\delta-a'}{y}+\frac{\delta}{x}
		    &=\frac{N-a'}{y}+\delta\frac{y-x}{xy}\\
				&\geq\frac{a''+y-1}{y}+(a''+1)\left(\frac{1}{x}-\frac{1}{y}\right)\\
		    &=1+\frac{1}{x}-\frac{2}{y}+\frac{a''}{x}\\
				&=\frac{x-1}{x}+2\left(\frac{1}{x}-\frac{1}{y}\right)+\frac{a''}{x}\\
		    &>\frac{r}{x}+\frac{a''}{x}.
	\end{align*}
So, $\ds\frac{N-\delta-a'}{y}\geq\frac{r-\delta}{x}+\frac{a''}{x}=-q+\frac{a''}{x}$.	Then  the inequality (\ref{z inequality}) holds by taking $z_\delta=-q$.
	Therefore, $N=a'+a''+y-1$.
	
	(b). We consider two cases.
	
\noindent
	 Case $a'\not=0$. 
	
	First, we show that $\fm^{p+x-1}\subseteq (t^b)$. This is equivalent with showing that 
	for any $u,v$ and $w$ non-negative integers such that $u+v+w=p+x-1$ there exist non-negative integers $\alpha,\beta$ and $\gamma$ with  $\beta\not=0$ such that
	\begin{equation*}
		ua+vb+wc=\alpha a+\beta b +\gamma c.
	\end{equation*} 
If $v\not=0$, then this is clear. Assume that $v=0$, $u=p+x-1-\delta$ and $w=\delta$ where $\delta\in\{0,\dots ,p+x-1\}$.
	We consider the two cases: $\delta\leq x-1$ and $\delta>x-1$.

\noindent	
If $\delta\leq x-1$, we have: 
\begin{align*}
		ua+wc&=(p+x-1-\delta)a+\delta py\\
		&=(x-1-\delta)a+ a'px+(\delta+a'')py\\
		&=(x-1-\delta)a+a'b+(\delta+a'')c.
	\end{align*} 
Since $a'\not=0$, we have written $ua+wc$ in the desired format.

\noindent	
If $\delta>x-1$, we set $\theta=\delta-x+1>0$. Remark that $p-\theta=u\geq 0$. We have:
\begin{align*}
		ua+wc&=(p+x-1-\delta)a+\delta py\\
		&=(p-\theta)a+ pxy+(\theta-1)py\\
		&=(p-\theta)a+yb+(\theta-1)c.
	\end{align*} 
Next, we show that $\fm^{p+x-2}\not\subseteq (t^b)$. Assume that there exist non-negative integers  $\alpha, \beta$ and $\gamma$ with $\beta\not=0$  and  $(p-1)a+(x-1)c=\alpha a+\beta b+\gamma c.$ This is  equivalent to
\begin{equation}
\label{equation pa+}		
		     pa+(x-1)py=(\alpha+1)a+\beta px+\gamma py.  
	\end{equation}                                                               
Since $\gcd(a,p)=1$, there exists a positive integer $\alpha'$ such that $\alpha+1=p\alpha'$. Thus, the last equality above is equivalent to
\begin{align*}
		a'x+a''y+(x-1)y&=\alpha'(a'x+a''y)+\beta x+\gamma y\ \iff \\
		y(a''-1+x-\alpha'a''-\gamma)&=x(\alpha'a'+\beta-a')\ \iff \\
		y[x-a''(\alpha'-1)-\gamma]&=x[a'(\alpha'-1)+\beta].
	\end{align*}                                     
Since $\gcd(x,y)=1$ and $a'(\alpha'-1)+\beta>0$, there exists a  positive integer $z$ such that 
\begin{align*}
		x-a''(\alpha'-1)-\gamma&=xz.
	\end{align*}
This together with the fact that $a''(\alpha'-1)+\gamma\geq 0$, implies that $z=1$ thus
\begin{align*}
		a''(\alpha'-1)=0\ \text{and}\  \gamma=0.
	\end{align*} 
If $a''\not=0$, then $\alpha'=1$ and, after dividing by $p$, the equality (\ref{equation pa+}) becomes $(x-1)y=\beta x$, which is a contradiction since $\gcd(x,y)=1$.
	If $a''=0$, then $a=a'x$ and, after dividing by $p$, the equality (\ref{equation pa+}) becomes $(x-1)y=(\alpha'-1)a'x+\beta x$, which is again a contradiction.

\noindent	
Case $a'=0$.  In particular, $a''\not=0$. 
	
	If $p<a''$, then apply part (a)  with 
	$a,\ b,\ c,\ p,\ x,\ y,\ a',\ a''$ taken to be  $px,\ py,\ a''y,\ y,\ p,\ a'',\ x,\ 0$ respectively.
	
	\noindent
	If $p>a''$, then apply part (a)  with 
	$a,\ b,\ c,\ p,\ x,\ y,\ a',\ a''$ taken to be $px,\ a''y,\ py,\ y,\ a'',\ p,\ 0,\ x$ respectively.

	(c) follows from (b) due to the symmetry of the statement in $x$ and $y$.

\end{proof}

\remark 
\label{Remark main}
Theorem \ref{main} and the results from \ref{index min} allow us now to give a ``formula'' for the index of a numerical semigroup ring of embedding dimension three  in terms of the generators of the semigroup. Indeed,
$\inde(R)=\min\{N_a,N_b,N_c\},$ where  $N_a, N_b$, $N_c$ were computed in  Theorem \ref{main}. Below are some special cases.

\begin{corollary}
\label{corollary main}
Let $R$ be a complete intersection semigroup ring of embedding dimension three. With the notation from \ref{ci edim 3}, set  $a=a'x+a''y$ with $a'$ and $a''$ non-negative integers and assume that $a''<x<y$.
\begin{enumerate}[\rm\quad(a)]
\item If $a'\not=0$ and $a''\not=0$, then $$\inde(R)=\min\{y+a'+a''-1,\  p+x-1\}.$$

\item If $a'=0$ and $p<a''$, then $$\inde(R)=\min\{y+a''-1,\  p+x-1\}.$$

\item If $a''=0$, then $$\inde(R)=\min\{ p+x-1,\  x+a'-1\}.$$
\end{enumerate}
\end{corollary}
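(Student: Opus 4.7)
The plan is to invoke Remark \ref{Remark main} to reduce the problem to computing $\inde(R)=\min\{N_a,N_b,N_c\}$, then substitute the formulas from Theorem \ref{main} under the standing assumption $a''<x<y$, and finally observe that in each of the three situations one of the three values is dominated by the others and can be discarded from the minimum. The hypothesis $a''<x$ is what makes this clean: it forces $\lfloor a''/x\rfloor=0$, and the condition $a''/x\in\BN$ simplifies to $a''=0$. So the formula in part (a) of Theorem \ref{main} collapses to $N_a=x+a'-1$ when $a''=0$ and $N_a=y+a'+a''-1$ when $a''\neq 0$.

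Cases (a) and (b) of the corollary are then quick. In both, $a''\neq 0$, so $N_a=y+a'+a''-1$ and the first branch of part (c) of Theorem \ref{main} gives $N_c=p+y-1$. Each hypothesis also falls cleanly into the first branch of part (b): in case (a) because $a'\neq 0$, and in case (b) because $a'=0$ with $p<a''$. Hence $N_b=p+x-1$, and since $x<y$ one has $N_c>N_b$, so $N_c$ drops out of the minimum and the stated formulas follow at once.

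The main obstacle is case (c), where $a''=0$ and the formula for $N_c$ splits into three sub-cases. Here $a=a'x$, and $\gcd(a,p)=1$ forces both $a'\neq 0$ and $\gcd(a',p)=1$; in particular $a'\neq p$. From $a'\neq 0$ the first branch of part (b) gives $N_b=p+x-1$, while part (a) gives $N_a=x+a'-1$. To justify dropping $N_c$, I would split on the three branches of part (c). If $p<a'$, then $N_c=p+y-1>p+x-1=N_b$ since $x<y$. If $a'<p$ and $y/a'\in\BN$, a short calculation reduces $N_c\geq N_a$ to $py/a'\geq x$, which follows from $a'x<px<py$. If $a'<p$ and $y/a'\notin\BN$, the crude lower bound $N_c\geq p+y-1$ already exceeds $N_a=x+a'-1$ since $p>a'$ and $y>x$. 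In every sub-case $N_c\geq\min\{N_a,N_b\}$, so the minimum reduces to $\min\{p+x-1,\,x+a'-1\}$ as claimed.
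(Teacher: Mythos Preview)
Your proof is correct and is precisely the verification the paper leaves implicit: the corollary is stated without proof, as an immediate consequence of Theorem~\ref{main} together with the formula $\inde(R)=\min\{N_a,N_b,N_c\}$ from Remark~\ref{Remark main}. Your case analysis---simplifying the formulas of Theorem~\ref{main} under the hypothesis $a''<x<y$ and then checking in each situation that $N_c$ is dominated by one of the other two values---is exactly what the reader is expected to carry out.
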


\begin{example} 
\label{ci example 1}
Let $R=k[|t^{4n}, t^{(4n+1)(2n-1)},t^{(4n+1)(2n+1)}|]$ where $k$ is an infinite  field and $n\geq 2$. Then $R$ is a complete intersection ring with 
$$\mult(R)-\inde(R)-\codim(R)+1=2n-3.$$
\end{example}

\begin{proof}
If we let $a=4n$, $x=2n-1$, $y=2n+1$ and $p=4n+1$, we obtain by Corollary \ref{corollary main}(a) that $\inde(R)=\min\{2n+2, 6n-1\}=2n+2$.
So, using now \ref{Properties semigroup ring} we get $\mult(R)-\inde(R)-\codim(R)+1=4n-(2n+2)-2+1=2n-3.$\end{proof}

\section{The index of a complete intersection  semigroup ring of edim >3}
\label{ci edim>3}

	In this section let  $H=\langle a_1,a_2,\dots,a_e\rangle$ be a semigroup  with 
	$\gcd(a_1,\dots,a_e)=1$ and $a_1<a_2<\dots<a_e$ and set $R=k[|H|]$ for an infinite field $k$.  The {\it order of an element} $s$ of a semigroup $H$, see \cite{b}, is defined by 
$$\ord(s)=\max\left\{\displaystyle\sum_{i=1}^{e}\alpha_i \Bigg| s=\sum_{i=1}^e\alpha_ia_i \right\}.$$

\remark 
\label{Remark} If $R$ is a Gorenstein numerical semigroup ring, then \cite[Lemma 2.5]{s} shows that $N_{a_1}=\ord(f(H)+a_1)+1.$ A direct computation, which we omit here, similar in difficulty and length to the proof of Theorem \ref{main}, shows that when $\edim(R)=3$ we have  $N_{a_i}=\ord(f(H)+a_j)+1$ for all $1\leq j\leq 3$. Prompted by this discovery, the author consulted Bryant and Shen \cite{bs} who confirmed that a more general result holds.   Although the  proof they suggest uses techniques not used in this paper, we include it here as it does not appear in literature. For details  on terminology and background results see \cite{b}.

\begin{proposition} 
\label{Shen-Bryant}
If  $R=k[|H|]$ is a Gorenstein numerical semigroup ring, then $$N_s=\ord(f(H)+s)+1,\quad\text{for all}\quad s\in H\setminus \{0\}.$$
\end{proposition}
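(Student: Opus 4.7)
The plan is to reformulate $N_s$ as a combinatorial minimum and then prove both inequalities $N_s\le \ord(f(H)+s)+1$ and $N_s\ge\ord(f(H)+s)+1$ by short arguments about representations of semigroup elements. The key translation is that $\fm^i$ is generated by the monomials $t^n$ with $n=\sum_{j=1}^{e}\alpha_j a_j$, $\alpha_j\ge 0$ and $\sum_j\alpha_j=i$, while $t^n\in(t^s)$ in $R$ holds exactly when $n-s\in H$. Consequently $\fm^i\subseteq(t^s)$ if and only if every $n$ admitting a length-$i$ expression in the $a_j$'s satisfies $n-s\in H$, and $N_s$ is the least such $i$. Set $m:=\ord(f(H)+s)$; note $f(H)+s\in H$ since $s\ge a_1>0$.

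For the lower bound $N_s\ge m+1$, pick a length-$m$ representation of $f(H)+s$ realizing the maximum in the definition of $\ord$. The corresponding monomial $t^{f(H)+s}$ lies in $\fm^m$ but not in $(t^s)$, because $(f(H)+s)-s=f(H)\notin H$ by definition of the Frobenius number. Hence $\fm^m\not\subseteq(t^s)$.

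For the upper bound $\fm^{m+1}\subseteq(t^s)$, take any $n$ with a length-$(m+1)$ representation and suppose, for contradiction, that $n-s\notin H$. The Gorenstein hypothesis says $H$ is symmetric, so $n-s\notin H$ forces $f(H)-(n-s)=f(H)+s-n\in H$; fix some representation $\sum_j\beta_j a_j$ of this element. Adding it to the given length-$(m+1)$ representation of $n$ yields a representation of $n+(f(H)+s-n)=f(H)+s$ of total length $\ge(m+1)+\sum_j\beta_j\ge m+1$, contradicting $\ord(f(H)+s)=m$. Hence $n-s\in H$ for every such $n$, giving the upper bound.

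The one conceptual pivot is recognizing that Gorenstein symmetry is exactly what converts the ``bad'' condition $n-s\notin H$ into a concrete element $f(H)+s-n\in H$, at which point the superadditivity $\ord(x+y)\ge\ord(x)+\ord(y)$ closes the argument. This is really the only obstacle: once the translation is in place, no case analysis, no restriction on $\edim(R)$, and none of the explicit formulas from Section \ref{Sec: ci edim=3} are needed, which is precisely why the statement extends beyond the embedding-dimension-three setting treated there.
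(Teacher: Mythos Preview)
Your proof is correct. The core idea coincides with the paper's: both arguments show that among all $n\in H$ with $n-s\notin H$ the order is maximized at $n=f(H)+s$, and both deduce this from symmetry via the observation that $n-s\notin H$ forces $f(H)+s-n\in H$, whence $\ord(n)\le\ord(f(H)+s)$ by superadditivity of $\ord$.

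The packaging differs. The paper first reformulates $N_s$ as $\max\{\ord(w):w\in\Ap(H;s)\setminus\{0\}\}+1$ using the Ap\'ery set $\Ap(H;s)=\{w\in H:w-s\notin H\}$, and then invokes (a mild extension of) \cite[Proposition~3.6]{b} to obtain the pairing $w_i+w_{s-1-i}=f(H)+s$ inside $\Ap(H;s)$, from which the maximum is read off. Your argument bypasses both the Ap\'ery-set language and the external citation: you use the definition of symmetry directly and never need the full pairing statement, only the single consequence $f(H)+s-n\in H$. This makes your version self-contained and slightly shorter; the paper's version, on the other hand, isolates the general (non-Gorenstein) identity $N_s=\max_{w\in\Ap(H;s)\setminus\{0\}}\ord(w)+1$, which may be of independent interest.
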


\begin{proof}
We recall, see \cite{b}, that the Ap\'ery set of $n\in H\setminus\{0\}$ is defined by $\Ap(H;n)=\{w\in H\mid w-n\not\in H\}$. When $s\in H$, then $$\Ap(H;s)=\{w_0,w_1,\dots,w_{s-1}\},$$
where $0=w_0<w_1<\dots<w_{s-1}=f(H)+s$.

 A homogeneous element $t^w\in\fm^i$ belongs to $(t^s)$ if and only if  $t^w=t^s\cdot t^u$ for some $u\in H$ if and only if $w-s\in H$.  On the other hand, $t^w\in\fm^i$ if and only if $\ord(w)\geq i$. 
Therefore, we get the first equality of: 
\begin{align*}
 N_s&=\min\{i\mid \text{for all}\ w\in H\  \text{such that}\ \ord(w)\geq i,\  \text{we have}\ w\not\in\Ap(H;s)\}\\
 &=\max\{\ord(w)\mid w\in\Ap(H;s)\setminus\{0\}\}+1.
 \end{align*}
 The second equality follows from the definition of the Ap\'ery set.
 It is easy to check that the proof of \cite[Proposition 3.6]{b} holds also  when $a_1$ is replaced by any $s\in H$. Thus,  for  a symmetric semigroup $H$ we obtain
 $$w_i+w_j=w_{s-1}=f(H)+s\ \text{for all}\ i+j=s-1.$$
 In particular, $$\ord(w)\leq\ord(f(H)+s),\ \text{for all}\ w\in\Ap(H;s)\setminus\{0\}.$$
 The desired conclusion now follows.\end{proof}
 
\begin{corollary}
\label{index Gor}
If $R=k[|H|]$ is a Gorenstein numerical semigroup ring, then 
$$\inde(R)=\min_{1\leq j\leq e}\{\ord(f(H)+a_j)\}+1.$$

\end{corollary}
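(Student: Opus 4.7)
The plan is to deduce this directly by combining Watanabe's formula for the index of a Gorenstein numerical semigroup ring (recorded in \ref{index min}) with the identity just established in Proposition \ref{Shen-Bryant}. No new computation should be needed.

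First I would recall from \ref{index min} that, since $R=k[|H|]$ is Gorenstein with infinite residue field, the index is attained among the minimal generators of $\fm$:
\[
\inde(R)=\min_{1\leq j\leq e}\{N_{a_j}\},
\]
where $N_{a_j}=\min\{i\mid \fm^i\subseteq (t^{a_j})\}$. Next I would invoke Proposition \ref{Shen-Bryant} applied to each $s=a_j\in H\setminus\{0\}$, which gives
\[
N_{a_j}=\ord(f(H)+a_j)+1\quad\text{for every }1\leq j\leq e.
\]

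Finally, substituting this into the previous display and pulling the additive constant $+1$ outside the minimum yields
\[
\inde(R)=\min_{1\leq j\leq e}\{\ord(f(H)+a_j)+1\}=\min_{1\leq j\leq e}\{\ord(f(H)+a_j)\}+1,
\]
which is the desired formula. There is no real obstacle here; the entire content of the corollary lies in Proposition \ref{Shen-Bryant}, and all that remains is the cosmetic observation that $\min(x_j+1)=\min(x_j)+1$.
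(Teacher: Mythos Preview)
Your proposal is correct and matches the paper's intent: the corollary is stated without proof precisely because it follows immediately from combining \ref{index min} with Proposition~\ref{Shen-Bryant}, exactly as you describe. There is nothing to add.
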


The next example shows that Proposition \ref{Shen-Bryant} does not hold in the case when $R$ is not a Gorenstein ring.

\begin{example} Let $k$ be an infinite  field and  $R=k[|t^4,t^5,t^{11}|]$. If $f$ is the  Frobenius number of the semigroup $\langle 4,5,11\rangle$, then  
$$N_4\not=\ord(f+4)+1,$$
\end{example}
\noindent
Indeed, $f=7$ and $\ord(f+4)+1=\ord(11)+1=2$ and $N_4\not=2$ as $t^{10}\in\fm^2\setminus(t^4)$.

\remark 
\label{Remark examples}
By contrast to Theorem \ref{main}, Corollary \ref{index Gor} does not give a  precise formula for the index in terms of the generators of the semigroup. 
However,  this corollary is very useful in computing the index of inductively defined Gorenstein numerical semigroup rings, at least when their Frobenius number can be explicitly computed.

\vspace{0.5cm}

Watanabe shows how one can construct complete intersection numerical semigroup rings. His result is generalized by Delorme \cite[Proposition 10]{dc}. 

\chunk
\cite[Lemma 1]{kw}
\label{w-exp 2}
If $H$ is a complete intersection numerical semigroup minimally generated by natural numbers $a_1<a_2<\cdots<a_e$, then $H'=\langle a, pH\rangle$ is a complete intersection for all $a$ and $p$ such that $a=\sum_{i=1}^{e}\alpha_ia_i$ with $\sum_{i=1}^{e}\alpha_i>1$ and $\gcd(a,p)=1$.  Moreover, in this case we have
$
f(H')=p\cdot f(H)+(p-1)a.
$

 \chunk
 \cite[Lemma 3]{kw}
\label{exp 2}
Set  $H_{n,a}=\langle 2^n,2^n+a,\dots,2^n+2^ia,\dots, 2^n+2^{n-1}a\rangle$ with $n\geq 1$ and $a$ is a positive odd integer.
The semigroup  $H_{n,a}$ is a complete intersection as it is obtained inductively: $H_{n,a}=\langle 2^n+a, 2H_{n-1,a}\rangle$, for all $n\geq 2$ and $H_{1,a}=\langle 2,2+a\rangle$.

\begin{lemma}
\label{Frobenius exp 2} For $n\geq 1$, the semigroup  $H_{n,a}$ defined in \ref{exp 2} has Frobenius number
$$f(H_{n,a})=(n-1)2^n+(2^n-1)a.$$ 
\end{lemma}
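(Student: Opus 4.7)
The plan is to prove this by induction on $n$, using the recursive description $H_{n,a}=\langle 2^n+a,\ 2H_{n-1,a}\rangle$ from \ref{exp 2} together with the Frobenius-number formula in \ref{w-exp 2}.

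For the base case $n=1$, the semigroup $H_{1,a}=\langle 2,2+a\rangle$ is minimally generated by two coprime integers (since $a$ is odd), so the classical two-generator formula gives $f(H_{1,a})=2(2+a)-2-(2+a)=a$, which agrees with $(1-1)\cdot 2+(2-1)a=a$.

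For the inductive step, I would assume the formula for $n-1$ and apply \ref{w-exp 2} to $H_{n,a}=\langle 2^n+a,\ 2H_{n-1,a}\rangle$, that is, with ``$a$'' replaced by $2^n+a$, ``$p$'' by $2$, and ``$H$'' by $H_{n-1,a}$. The two hypotheses of \ref{w-exp 2} must be verified: (i) $\gcd(2^n+a,2)=1$, which is immediate because $a$ is odd; and (ii) $2^n+a$ can be written as a sum of at least two generators of $H_{n-1,a}$. For (ii) I would use the decomposition
\[
2^n+a=2^{n-1}+(2^{n-1}+a),
\]
exhibiting $2^n+a$ as a sum of the first two minimal generators of $H_{n-1,a}$. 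Once the hypotheses are in place, \ref{w-exp 2} yields
\[
f(H_{n,a})=2\,f(H_{n-1,a})+(2-1)(2^n+a),
\]
and substituting $f(H_{n-1,a})=(n-2)2^{n-1}+(2^{n-1}-1)a$ from the induction hypothesis gives
\[
f(H_{n,a})=(n-2)2^n+(2^n-2)a+2^n+a=(n-1)2^n+(2^n-1)a,
\]
as desired.

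No step looks genuinely difficult; the only point that needs a moment of care is verifying condition (ii), i.e.\ that $2^n+a$ is expressible as a sum of more than one generator of $H_{n-1,a}$, since this is the nontrivial hypothesis of Delorme/Watanabe's construction. Once that is noted, the induction and the arithmetic collapse immediately to the stated closed form.
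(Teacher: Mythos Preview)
Your proof is correct and follows exactly the approach indicated in the paper, which merely states that the result follows by induction on $n$ using \ref{w-exp 2}; you have simply supplied the details (base case, hypothesis check, and arithmetic) that the paper omits.
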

\begin{proof}
It follows by induction on $n$ using \ref{w-exp 2}.
\end{proof}

\begin{proposition} 
\label{index exp 2}
For $n\geq 1$, and $k$ an infinite field, let  $R_{n,a}=k[|H_{n,a}|]$ be a semigroup ring with $H_{n,a}$ given in \ref{exp 2}. Then $$\inde(R_{n,a})=n+1.$$
In particular, $$\mult(R_{n,a})-\inde(R_{n,a})-\codim(R_{n,a})+1=2^n-2n.$$

\end{proposition}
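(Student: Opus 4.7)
The plan is to invoke Corollary~\ref{index Gor}. Since $H_{n,a}$ is a complete intersection numerical semigroup by~\ref{exp 2}, the ring $R_{n,a}$ is Gorenstein; Lemma~\ref{Frobenius exp 2} gives $f:=f(H_{n,a})=(n-1)2^n+(2^n-1)a$. Label the $n+1$ generators as $a_1=2^n$ and $a_j=2^n+2^{j-2}a$ for $j=2,\ldots,n+1$, so $\mult(R_{n,a})=2^n$ and $\edim(R_{n,a})=n+1$ by~\ref{Properties semigroup ring}. It suffices to show $\ord(f+a_1)=n$ and $\ord(f+a_j)\geq n+1$ for $j\geq 2$.

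The key technical step parameterizes non-negative representations. Writing $f+a_j=\alpha_0\cdot 2^n+\sum_{i=1}^n\beta_i\bigl(2^n+2^{i-1}a\bigr)$ with $\alpha_0,\beta_i\geq 0$ integers, and setting $c_1:=2^n-1$ and $c_j:=2^n-1+2^{j-2}$ for $j\geq 2$ so that $f+a_j=n\cdot 2^n+c_j\cdot a$, expansion gives
$$\Bigl(\alpha_0+\sum_{i=1}^n\beta_i-n\Bigr)\cdot 2^n=\Bigl(c_j-\sum_{i=1}^n\beta_i 2^{i-1}\Bigr)\cdot a.$$
Since $a$ is odd, $\gcd(2^n,a)=1$, so integer solutions form a one-parameter family indexed by $k\in\BZ$ with $\alpha_0+\sum_i\beta_i=n+ak$ and $\sum_i\beta_i 2^{i-1}=c_j-2^n k$, and $\beta_i\geq 0$ forces $k\leq c_j/2^n$.

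For $j=1$: $c_1/2^n<1$ gives $k\leq 0$, so the coefficient sum $n+ak$ is at most $n$; this maximum is attained by $\beta_i=1$ for all $i$ and $\alpha_0=0$, yielding the explicit representation $f+2^n=a_2+\cdots+a_{n+1}$. Hence $\ord(f+2^n)=n$. For $j\geq 2$: $c_j/2^n<2$ allows $k=1$, and then $c_j-2^n=2^{j-2}-1\geq 0$ admits a non-negative base-two realization by the $\beta_i$, giving a valid representation of coefficient sum $n+a\geq n+1$. Thus $\ord(f+a_j)\geq n+1$, the minimum is attained at $j=1$, and $\inde(R_{n,a})=n+1$ by Corollary~\ref{index Gor}.

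The second assertion is then immediate: $\codim(R_{n,a})=\edim(R_{n,a})-\dim(R_{n,a})=n$, so $\mult(R_{n,a})-\inde(R_{n,a})-\codim(R_{n,a})+1=2^n-(n+1)-n+1=2^n-2n$. The main obstacle is the upper bound $\ord(f+2^n)\leq n$, which requires ruling out every longer representation. The coprimality of $2^n$ and $a$ is the essential tool: it collapses the combinatorial search for representations to a single-variable bound $k\leq 0$, after which the maximal representation can simply be exhibited.
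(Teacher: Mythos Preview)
Your proof is correct and follows essentially the same approach as the paper: both invoke Corollary~\ref{index Gor}, compute $\ord(f+2^n)=n$ via the coprimality of $2^n$ and $a$ (parameterizing representations by an integer and bounding it), and exhibit explicit representations to bound the remaining orders from below. Your treatment of the generators $a_j$ with $j\geq 2$ is slightly more uniform than the paper's case split---you use the same parameterization with $k=1$ to get $\ord(f+a_j)\geq n+a$ in one stroke, whereas the paper writes out three separate explicit representations (for $k=0$, $1\leq k\leq n-2$, $k=n-1$) and only obtains $\ord\geq n$ in the middle range---but this is a cosmetic streamlining rather than a different method.
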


\begin{proof} Using Corollary \ref{index Gor} it is enough to compute the minimum of  $\ord(f(H_{n,a})+2^n)$ and $\ord(f(H_{n,a})+2^n+2^{k}a)$ for all $0\leq k\leq n-1$.

We claim that  $\ord(f(H_{n,a})+2^n)=n.$
Indeed, by Lemma \ref{Frobenius exp 2} we have 
\begin{align*}
f(H_{n,a})+2^n&=n2^n+(2^n-1)a\\
              &=(2^n+a)+(2^n+2a)+\cdots+(2^n+2^{n-1}a).
\end{align*}
Thus, $\ord(f(H_{n,a})+2^n)\geq n.$ In general, we assume that  
$$
f(H_{n,a})+2^n=\alpha_02^n+\alpha_1(2^n+a)+\alpha_2(2^n+2a)+\cdots+\alpha_n(2^n+2^{n-1}a).$$
This is equivalent to 
$$n2^n+(2^n-1)a=\alpha_02^n+\alpha_1(2^n+a)+\alpha_2(2^n+2a)+\cdots+\alpha_n(2^n+2^{n-1}a).$$ Thus, we have
$$
2^n\cdot[n-(\alpha_0+\alpha_1+\cdots+\alpha_n)]=a\cdot(\alpha_1+2\alpha_2+\cdots+2^{n-1}\alpha_n-2^n+1),$$
which implies, since $a$ is odd, that there exists an integer  $z$ such that 
\begin{align*}
n-(\alpha_0+\alpha_1+\cdots+\alpha_n)&=az,\ \text{and}\\
\alpha_1+2\alpha_2+\cdots+2^{n-1}\alpha_n-2^n+1&=2^nz.
\end{align*}
In particular, the second equality gives that $z\geq 0$ and the first one gives that 
$$ \alpha_0+\alpha_1+\cdots+\alpha_n=n-az\leq n.$$
Since the order is given by the maximum of such sums,  the claim holds.

Next, we show that $\ord(f(H_{n,a})+2^n+2^ka)\geq n$ for all $0\leq k\leq n-1$, thus concluding our proof. 

\noindent If $k=0$, then $f(H_{n,a})+2^n+a=(n+a)2^n.$ Thus, $\ord(f(H_{n,a})+2^n+a)\geq n+a$. 

\noindent If $1\leq k\leq n-2$, then 
\begin{align*}
f(H_{n,a})+2^n+2^ka&=n2^n+(2^n-1)a+2^ka\\
                   &=(2^n+a)+\dots+(2^n+2^{k-1}a)\\
                   &+2(2^n+2^{k+1}a)+\dots+(2^n+2^{n-1}a).
\end{align*}
Thus, $\ord(f(H_{n,a})+2^n+2^ka)\geq n$.

\noindent If  $k=n-1$, then 
\begin{align*}
f(H_{n,a})+2^n+2^{n-1}a&=n2^n+(2^n-1)a+2^{n-1}a\\
                   &=(2^n+a)+\dots+(2^n+2^{n-2}a)+ (1+a)2^n.
\end{align*}    
Thus,  $\ord(f(H_{n,a})+2^n+2^{n-1}a)\geq n+a$. Therefore, the minimum order is $n$, so $\inde(R_{n,a})=n+1$. The last equality in the statement follows from \ref{Properties semigroup ring}. \end{proof}

%%%%%%%%%%%%%%%%%%%%%%%%%%%%%%%%%%%%%%%%%%%%%%%%%%%%%%%%%%%%%%%%%%%%%%%%%%%%%%%
%%%%%%%%%%%%%%%%%%%%%%%%%%%%%%%%%%%%%%%%%%%%%%%%%%%%%%%%%%%%%%%%%%%%%%%%%%%%%%%
%%%%%%%%%%%%%%%%%%% ACKNOWLEDGMENTS %%%%%%%%%%%%%%%%%%%%%%%%%%%%%%%%%%%%%%%%%%%
%%%%%%%%%%%%%%%%%%%%%%%%%%%%%%%%%%%%%%%%%%%%%%%%%%%%%%%%%%%%%%%%%%%%%%%%%%%%%%
%%%%%%%%%%%%%%%%%%%%%%%%%%%%%%%%%%%%%%%%%%%%%%%%%%%%%%%%%%%%%%%%%%%%%%%%%%%%%%

\section*{Acknowledgments}
I would like to thank Alex Martsinkovsky for several discussions on this subject, Lance Bryant  and Yi-Huang Shen for providing the proof of Proposition \ref{Shen-Bryant} and the referee for valuable comments that improved the presentation.

%%%%%%%%%%%%%%%%%%%%%%%%%%%%%%%%%%%%%%%%%%%%%%%%%%%%%%%%%%%%%%%%%%%%%%%%%%%%%%
%%%%%%%%%%%%%%%%%%%%%%%%%%%%%%%%%%%%%%%%%%%%%%%%%%%%%%%%%%%%%%%%%%%%%%%%%%%%%%
%%%%%%%%%%%%%%%%%%% BIBLIOGRAPHY %%%%%%%%%%%%%%%%%%%%%%%%%%%%%%%%%%%%%%%%%%%%%
%%%%%%%%%%%%%%%%%%%%%%%%%%%%%%%%%%%%%%%%%%%%%%%%%%%%%%%%%%%%%%%%%%%%%%%%%%%%%%
%%%%%%%%%%%%%%%%%%%%%%%%%%%%%%%%%%%%%%%%%%%%%%%%%%%%%%%%%%%%%%%%%%%%%%%%%%%%%%

\end{document}